\newcommand{\U}{{\mathcal U}}
\newcommand{\0}{{\mathbf 0}}
\newcommand{\C}{{\mathbb C}}
\newcommand{\Z}{{\mathbb Z}}
\newcommand{\dm}{\operatorname{dim}}
\newtheorem{defn0}{Definition}[section]
\newtheorem{prop0}[defn0]{Proposition}
\newtheorem{conj0}[defn0]{Conjecture}
\newtheorem{thm0}[defn0]{Theorem}
\newtheorem{lem0}[defn0]{Lemma}
\newtheorem{corollary0}[defn0]{Corollary}
\newtheorem{example0}[defn0]{Example}
\newtheorem{remark0}[defn0]{Remark}
\newtheorem{question0}[defn0]{Question}
\newtheorem{exercise0}[defn0]{Exercise}
\newenvironment{defn}{\begin{defn0}}{\end{defn0}}
\newenvironment{prop}{\begin{prop0}}{\end{prop0}}
\newenvironment{thm}{\begin{thm0}}{\end{thm0}}
\newenvironment{rem}{\begin{remark0}\rm}{\end{remark0}}
\newcommand{\propref}[1]{Proposition~\ref{#1}}
\newcommand{\thmref}[1]{Theorem~\ref{#1}}
\newcommand{\secref}[1]{Section~\ref{#1}}
\newcommand{\mbf}[1]{{\mathbf #1}}
\title{Control by the lowest degree vanishing cycles}
\subjclass[2010]{32B15, 32C18, 32B10, 32S25, 32S15, 32S55}
\author{David B. Massey}
\date{}
\begin{document}

\begin{abstract} Given the germ of an analytic function on affine space with a smooth critical locus, we prove that the constancy of the stalk cohomology of the Milnor fiber in lowest degree off a codimension two subset of the critical locus implies that the vanishing cycles are concentrated in lowest degree and are constant. \end{abstract}

\maketitle

%\newpage

%\tableofcontents

%\newpage

\section{Introduction} Suppose that $\U$ is a non-empty open neighborhood of the origin in $\C^{n+1}$, where $n\geq 1$, and let $f:(\U, \0)\rightarrow (\C,0)$ be a nowhere locally constant, complex analytic function. Then $V(f)=f^{-1}(0)$ is a hypersurface in $\U$ of pure dimension $n$, where for convenience we have assumed that $\0\in V(f)$. 

Let $s$ denote the dimension, $\dim_\0\Sigma f$,  of  the critical locus of $f$ at the origin. As is well-known, the Curve Selection Lemma implies that, near $\0$, $\Sigma f\subseteq V(f)$, and we choose $\U$ small enough so that this containment holds everywhere in $\U$ and so ever irreducible component of $\Sigma f$ in $\U$ contains the origin.

Suppose that $\mbf p\in\Sigma f$, and let $d:=\dim_{\mbf p}\Sigma f$; it is well-known (see \cite{katomatsu}) that the reduced integral homology, $\widetilde H_k(F_{f, \mbf p}; \Z)$, of $F_{f, \mbf p}$ can be non-zero only for $n-d\leq k\leq n$, and is free Abelian in degree $n$.  Cohomologically, this implies that $\widetilde H^k(F_{f, \mbf p}; \Z)$ can be non-zero only for $n-d\leq k\leq n$, and is free Abelian in degree $n-d$.

\bigskip

The most basic form of our main result is:

\medskip

\noindent {\bf Theorem}: (Main Theorem - basic form)\  {\it Suppose that, at the origin, $\Sigma f$ is smooth of dimension $s$. Suppose also that there exists an analytic subset $Y\subseteq\Sigma f$ such that $\dim_\0 Y\leq s-2$ and, for all $\mbf p\in \Sigma f\backslash Y$, $\widetilde H^{n-s}(F_{f, \mbf p}; \Z)\cong \Z^{\mu_0}$, where $\mu_0$ is independent of $\mbf p$. 

Then, $f$ defines an $s$-dimensional family of isolated singularities which is $\mu$-constant (i.e., has constant Milnor number). In particular, this implies, for all $\mbf p\in\Sigma f$ near $\0$, $\widetilde H^k(F_{f, \mbf p}; \Z)$ is zero except when $k=n-s$ and $\widetilde H^{n-s}(F_{f, \mbf p}; \Z)\cong \Z^{\mu_0}$. 
}

\bigskip

We  interpret this result in terms of the complex of sheaves of vanishing cycles and, of course, can combine it with the result of L\^e-Ramanujam \cite{leramanujam}  to reach a conclusion about the ambient topological-type of the hypersurface along $\Sigma f$. 

From the statement of the main theorem, one might expect that the proof uses known properties of one or more of the derived category, the category of perverse sheaves, the Decomposition Theorem, A'Campo's result on the Lefschetz number of the monodromy, the weight/nilpotent filtration on the nearby cycles, mixed Hodge modules, etc. In fact, we use none of these; the crux of the proof uses the main result of ours with L\^e in \cite{lemassey}, the proof of which involves the geometry of the relative polar curve and distinguished bases for the vanishing cycles in the isolated critical point case. 

In \secref{sec:muconst}, we recall various equivalences for $\mu$-constant families, as described in \cite{lemassey}. In \secref{sec:lecycles}, we  recall basic properties of L\^e cycles that we need, recall our main result with L\^e from \cite{lemassey}, and prove the  main theorem.  In the final section of this paper, we discuss possible generalizations and questions that naturally arise.

\section{Families with constant Milnor number}\label{sec:muconst}

This section is essentially a summary of Section 2 of \cite{lemassey}.

\smallskip

We continue with the notation from the introduction: $\U$ is a non-empty open neighborhood of the origin in $\C^{n+1}$, where $n\geq 1$,  $f:(\U, \0)\rightarrow (\C,0)$ is a nowhere locally constant, complex analytic function, and $s=\dim_\0\Sigma f$.

\smallskip

We will use $\mathbf x:=(x_0, \dots, x_n)$ to denote the standard coordinate functions on $\C^{n+1}$. We will use $\mathbf z:=(z_0, \dots, z_n)$ to denote arbitrary analytic local coordinates on $\mathcal U$ near the origin. All of our constructions and results will depend only on the linear part of the coordinates $\mathbf z$; hence, when we say that the $\mathbf z$ are chosen generically, we mean that the linear part of $\mathbf z$ consists of a generic linear combination of $\mathbf x$ (generic in $\operatorname{PGL}(\mathbb C^{n+1})$).

\smallskip

We wish to consider families of singularities. Fix a set of local coordinates $\mathbf z$ for $\mathcal U$ at the origin. Let $G:=(z_0, \dots, z_{s-1})$. If $\mathbf q\in\mathcal U$, we define $f_{\mathbf q}:=f_{|_{G^{-1}(G(\mathbf q))}}$.

\medskip

\begin{defn}\label{def:simplemuconst}
We say that  $f_{\mathbf q}$  is  a simple $\mu$-constant family  at the origin if and only if , at the origin
\begin{itemize}
\item $f_{\mathbf 0}$ has an isolated critical point, 
\smallskip
\item $\Sigma f$ is smooth, 
\smallskip
\item $G_{|_{\Sigma f}}$ has a regular point, and, 
\end{itemize}
\noindent for all $\mathbf q\in\Sigma f$ close to the origin, the Milnor number, $\mu_{\mathbf q}(f_{\mathbf q})$, of $f_{\mbf q}$ at $\mbf q$ is independent of $\mathbf q$.
\end{defn}

A simple $\mu$-constant family may seem like too strong a notion of a ``$\mu$-constant family''. However, as we shall see in \thmref{thm:milnorequi} below, all other reasonable concepts of $\mu$-constant families are equivalent. First, we need some notation.

\bigskip

Suppose that $\dm_{\mathbf 0}\Sigma(f_{\mathbf 0})=0$. Then, the analytic cycle 
$$\left[V\Big(z_0, \dots, z_{s-1}, \frac{\partial f}{\partial z_s}, \dots, \frac{\partial f}{\partial z_n}\Big)\right]$$
 has the origin as a $0$-dimensional component, and $[\mathbf 0]$ appears in this cycle with multiplicity $\mu_{\mathbf 0}(f_{\mathbf 0})$. Thus, at the origin, $C:=\left[V\Big( \frac{\partial f}{\partial z_s}, \dots, \frac{\partial f}{\partial z_n}\Big)\right]$ is purely $s$-dimensional and is properly intersected by $[V(z_0, \dots, z_{s-1})]$. 
 
 Let $\Gamma^s_{f, \mathbf z}$ denote the sum of the components of $C$ which are not contained in $\Sigma f$, and let $\Lambda^s_{f, \mathbf z}:= C-\Gamma^s_{f, \mathbf z}$. The cycles $\Gamma^s_{f, \mathbf z}$and $\Lambda^s_{f, \mathbf z}$  are, respectively, the $s$-dimensional polar cycle and $s$-dimensional L\^e cycle; see \cite{lecycles}. It follows at once that, for all $\mbf q\in\Sigma f$ near 
 $\0$, 
 
$$
\mu_{\mathbf q}(f_{\mathbf q}) = \big(\Gamma^s_{f, \mathbf z}\cdot V(z_0-q_0, \dots, z_{s-1}-q_{s-1})\big)_{\mathbf q}+\big(\Lambda^s_{f, \mathbf z}\cdot V(z_0-p_0, \dots, z_{s-1}-p_{s-1})\big)_{\mathbf q}. \leqno{(\dagger)}$$

Note that $\Gamma^s_{f, \mathbf z}=0$ is equivalent to the equality of sets $\displaystyle \Sigma f= V\Big(\frac{\partial f}{\partial z_s}, \dots, \frac{\partial f}{\partial z_n}\Big)$.

\smallskip

For each $s$-dimensional component, $\nu$, of $\Sigma f$, for a generic point $\mbf p\in\nu$, for a generic codimension $s$ (in $\U$) affine linear subspace, $N$ (a normal slice),  containing $\mbf p$, the function $f_{|_N}$ has an isolated critical point at $\mbf p$ and the Milnor number at $\mbf p$ is independent of the choices;  we let ${\stackrel{\circ}{\mu}}_\nu$ denote this common value. 

Then $\Lambda^s_{f, \mathbf z}=\sum_\nu {\stackrel{\circ}{\mu}}_\nu[\nu]$, where the sum is over the $s$-dimensional components $\nu$ of $\Sigma f$, and, by definition, $\lambda^s_{f, \mathbf z}(\mathbf 0)= \big(\Lambda^s_{f, \mathbf z}\cdot V(z_0, \dots, z_{s-1})\big)_{\mathbf 0}$. Therefore, the $s$-dimensional L\^e number \cite{lecycles}, $\lambda^s_{f, \mathbf z}(\mathbf 0)$,  at the origin is defined, and 
$$\lambda^s_{f, \mathbf z}(\mathbf 0)=\sum_\nu {\stackrel{\circ}{\mu}}_\nu\big(\nu\cdot V(z_0, \dots, z_{s-1})\big)_{\mathbf 0}.
$$
 If the coordinates $(z_0, \dots, z_{s-1})$ are sufficiently generic, then $\lambda^s_{f, \mathbf z}(\mathbf 0)$ obtains its minimum value of $\sum_\nu {\stackrel{\circ}{\mu}}_\nu{\operatorname{mult}}_{\mathbf 0}\nu$; we denote this generic value by $\lambda^s_{f}(\mathbf 0)$ (with no subscript by the coordinates). 

\medskip

Note that $\dm_{\mathbf 0}\Sigma(f_{\mathbf 0})=0$ implies that, for all $\mbf q\in\Sigma f$ near $\0$, $\dim_{\mbf q}\Sigma(f_{\mbf q})=0$.

\medskip

There is one more piece of preliminary notation that we need. Consider the blow-up of $\mathcal U$ along the Jacobian ideal, $J(f)$ of $f$, i.e., $B:={\operatorname{Bl}}_{J(f)}\mathcal U$. This blow-up naturally sits inside $\mathcal U\times \mathbb P^n$. Thus, the exceptional divisor $E$ of the blow-up is a cycle in $\mathcal U\times \mathbb P^n$. 

\medskip

We now give  a number of equivalent characterizations of {\bf $\mu$-constant families}; this is a combination of Theorem 2.3 and Corollary 5.4 of \cite{lemassey}.

\medskip

\begin{thm}\label{thm:milnorequi} Let $\mathbf z$ be local coordinates for $\mathcal U$ at the origin such that $\dm_{\mathbf 0}\Sigma(f_{\mathbf 0})=0$. Then, the following are equivalent: 
\begin{enumerate}[1{.}]
\item For all $\mathbf q\in\Sigma f$ near the origin, $\mu_{\mathbf 0}(f_{\mathbf 0}) = \mu_{\mathbf q} (f_{\mathbf q})$.
\smallskip
\item $\mu_{\mathbf 0}(f_{\mathbf 0})=\lambda^s_f(\mathbf 0)$.
\smallskip
\item $f_{\mathbf q}$ is a simple $\mu$-constant family.
\smallskip

\item $\mu_{\mathbf 0}(f_{\mathbf 0})=\lambda^s_{f, \mathbf z}(\mathbf 0)$.
\smallskip

\item $\Gamma^s_{f, \mathbf z}=0$.
\end{enumerate}

Furthermore, if $n-s\neq 2$, then 1), 2), 3), 4), and 5) above hold if and only if the local, ambient, topological-type of $V(f_{\mathbf q})$ at $\mathbf q$ is independent of the point $\mathbf q\in\Sigma f$ near the origin.

\medskip

\noindent In addition, the following are equivalent:
\begin{enumerate}[a{.}]
\item There exist coordinates $\mathbf z$ such that 1), 2), 3), 4), and 5) above hold.
\smallskip

\item Near the origin, $\Sigma f$ is smooth and $(\mathcal U-\Sigma f, \Sigma f)$ is an $a_f$ stratification, i.e., for all $\mathbf p\in\Sigma f$ near the origin,  for every limiting tangent space, $\mathcal T_{\mathbf p}$, from level hypersurfaces of $f$ approaching $\mathbf p$ , $T_{\mathbf p}(\Sigma f)\subseteq \mathcal T_{\mathbf p}$.
\smallskip

\item $\Sigma f$ is smooth at the origin, and over an open neighborhood of the origin, the exceptional divisor, $E$, as a set, is equal to the projectivized conormal variety to $\Sigma f$ and, hence, as cycles $E=\mu\left[T^*_{\Sigma f}\,\U\right]$ for some positive integer $\mu$.
\smallskip

\item For generic $\hat{\mathbf z}$, $\Gamma^s_{f, \hat{\mathbf z}} =0$ near the origin.
\smallskip

\item $\Sigma f$ is smooth at the origin and, for all local coordinates $\hat{\mathbf z}$ such that $V(\hat z_0, \dots, \hat z_{s-1})$ transversely intersects $\Sigma f$ at the origin, $\hat f_{\mathbf q}$ is a simple $\mu$-constant family.
\smallskip

\item $\Sigma f$ is smooth at the origin and, for all $\mbf q$ near the origin, the non-zero reduced cohomology of $F_{f, \mbf q}$ is concentrated in degree $n-s$, and $\widetilde H^{n-s}(F_{f, \mbf q}; \Z)\cong \Z^\mu$, where $\mu$ is independent of $\mbf q$.
\smallskip

\item  $\Sigma f$ is smooth at the origin and the constructible complex of shifted, restricted vanishing cycles $\big(\phi_f[-1]\Z^\bullet_\U[n+1]\big)_{|_{\Sigma f}}$ is isomorphic in the derived category (or category of perverse sheaves) to a shifted constant sheaf $\big(\Z^\mu\big)^\bullet_{\Sigma f}[s]$ for some positive integer $\mu$.

\end{enumerate}

\smallskip

\noindent In addition, the $\mu$ in (c), (f), and (g) equals the $\mu_\0(f_\0)$ in (1).

\end{thm}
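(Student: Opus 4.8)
The plan is to split the statement into three blocks — the numerical equivalences $(1)$–$(5)$, the topological-type addendum valid for $n-s\neq 2$, and the coordinate-free block $(a)$–$(g)$ — and to prove each by feeding the intersection formula $(\dagger)$ into the positivity and semicontinuity of L\^e and polar cycles. The hub is the specialization of $(\dagger)$ to $\mathbf q=\mathbf 0$,
$$\mu_{\mathbf 0}(f_{\mathbf 0})=\big(\Gamma^s_{f,\mathbf z}\cdot V(z_0,\dots,z_{s-1})\big)_{\mathbf 0}+\lambda^s_{f,\mathbf z}(\mathbf 0),$$
where the polar term is a proper intersection of an effective cycle, hence $\ge 0$, and $\lambda^s_{f,\mathbf z}(\mathbf 0)\ge\lambda^s_f(\mathbf 0)$ by minimality of the generic L\^e number. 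This yields the purely formal implications within $(1)$–$(5)$: the chain $\mu_{\mathbf 0}(f_{\mathbf 0})\ge\lambda^s_{f,\mathbf z}(\mathbf 0)\ge\lambda^s_f(\mathbf 0)$ gives $(2)\Rightarrow(4)$; the vanishing of the polar term is equivalent to $\mathbf 0\notin|\Gamma^s_{f,\mathbf z}|$, which in a small enough neighborhood is $\Gamma^s_{f,\mathbf z}=0$, giving $(4)\Leftrightarrow(5)$; and $(3)\Rightarrow(1)$ is immediate from \defref{def:simplemuconst}.

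All remaining implications in this block are instances of a single geometric-rigidity statement: numerical constancy of the Milnor number forces $\Sigma f$ to be smooth, forces $G_{|_{\Sigma f}}$ to have a regular point (so the slice is transverse and $\lambda^s_{f,\mathbf z}(\mathbf 0)=\lambda^s_f(\mathbf 0)=\mu_{\mathbf 0}(f_{\mathbf 0})$), and forces $\Gamma^s_{f,\mathbf z}=0$. Concretely I would establish $(1)\Rightarrow(3)$ — so that $(1)$, $(3)$ are equivalent and drag in $(2)$ and $(5)$ — by reading $(\dagger)$ along $\Sigma f$ and showing that a nonzero polar component through $\mathbf 0$, or a singular or non-transverse point of $\Sigma f$, would make $\mu_{\mathbf q}(f_{\mathbf q})$ jump; this is exactly the point where the argument leaves formal cycle theory and invokes the relative-polar-curve geometry and distinguished-basis analysis of \cite{lemassey}. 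The topological-type addendum then follows from L\^e--Ramanujam \cite{leramanujam}: under $(1)$ the family is $\mu$-constant, so for $n-s\neq 2$ the local ambient topological type of $V(f_{\mathbf q})$ is constant along $\Sigma f$, while the converse is the elementary fact that constant ambient type forces constant $\mu_{\mathbf q}(f_{\mathbf q})$.

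For the coordinate-free block the organizing principle is the dictionary relating the generic relative polar cycle, the $a_f$ condition, the exceptional divisor of $\operatorname{Bl}_{J(f)}\U\subseteq\U\times\Proj^n$, and the restricted vanishing cycles. I would prove $(b)\Leftrightarrow(c)$ by identifying the fiber of $E$ over $\mathbf p\in\Sigma f$ with the projectivized limiting $df$-directions at $\mathbf p$ and noting that $a_f$ says precisely that each such direction annihilates $T_{\mathbf p}(\Sigma f)$, i.e.\ lies in the projectivized conormal variety $\Proj(T^*_{\Sigma f}\U)$; set-theoretic equality together with the generic multiplicity of $E$ along $\Sigma f$ records the cycle identity $E=\mu[T^*_{\Sigma f}\U]$. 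The equivalences $(a)\Leftrightarrow(d)\Leftrightarrow(b)$ rest on the classical fact that the generic relative polar cycle is empty exactly when $a_f$ holds, so that one transverse slice with $\Gamma^s=0$ propagates to all generic slices; and $(e)$ is the observation that under $a_f$ every slice transverse to $\Sigma f$ is transverse to all limiting tangent spaces, hence has vanishing polar cycle and realizes $(1)$–$(5)$. Finally $(f)\Leftrightarrow(g)$ is the translation between the stalk cohomology of $\phi_f[-1]\Z^\bullet_\U[n+1]$ and the reduced cohomology of $F_{f,\mathbf q}$, while $(f)\Leftrightarrow(b)$ holds because $a_f$ gives local triviality of the Milnor fibration along the smooth $\Sigma f$ (so $F_{f,\mathbf q}$ is the transverse Milnor fiber, with cohomology $\Z^{{\stackrel{\circ}{\mu}}}$ concentrated in degree $n-s$), and conversely the concentration-and-constancy of $(f)$ forces the vanishing cycles to be a shifted constant sheaf, recovering $a_f$. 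The common value $\mu$ in $(c)$, $(f)$, $(g)$ is identified with $\mu_{\mathbf 0}(f_{\mathbf 0})$ by computing all three as the generic transverse Milnor number ${\stackrel{\circ}{\mu}}$: the multiplicity of $E$ over a generic point of $\Sigma f$ is the Milnor number of the transverse slice, the ranks in $(f)$ and $(g)$ are that same number, and under $(1)$–$(5)$ one has $\mu_{\mathbf 0}(f_{\mathbf 0})={\stackrel{\circ}{\mu}}$.

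The main obstacle is the geometric-rigidity step $(1)\Rightarrow(3)$ and its sheaf-theoretic counterpart $(f)\Rightarrow(b)$: extracting, from mere numerical constancy of the Milnor number, the strong conclusions that $\Sigma f$ is smooth, that the relative polar cycle vanishes, and that $a_f$ holds. This is the arbitrary-$s$ generalization of the L\^e--Saito theorem and is precisely where one must abandon formal manipulation of $(\dagger)$ and use the detailed analysis of the relative polar curve and of distinguished bases for the vanishing cycles from \cite{lemassey}; every other implication is either bookkeeping with $(\dagger)$ or an application of the polar/conormal/vanishing-cycle dictionary.
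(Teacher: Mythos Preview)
The present paper does not prove \thmref{thm:milnorequi}; it is simply stated, with the remark that it ``is a combination of Theorem 2.3 and Corollary 5.4 of \cite{lemassey}'', and no argument is given here. So there is no in-paper proof against which to compare your proposal.

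That said, your outline is a reasonable reconstruction of how such a proof would be organized. You correctly isolate the implications that follow formally from $(\dagger)$ together with effectivity of the polar cycle and minimality of the generic L\^e number (the chain $\mu_{\mathbf 0}(f_{\mathbf 0})\ge\lambda^s_{f,\mathbf z}(\mathbf 0)\ge\lambda^s_f(\mathbf 0)$, and the equivalence $(4)\Leftrightarrow(5)$), and you correctly identify that the substantive content---smoothness of $\Sigma f$, transversality of the slice, vanishing of $\Gamma^s$, and the $a_f$ condition, all from numerical constancy---is a rigidity statement requiring the relative-polar-curve and distinguished-basis analysis of \cite{lemassey}. Since you explicitly defer that step to \cite{lemassey}, your proposal ultimately does what the paper does, only with the surrounding bookkeeping spelled out.

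One point to watch in your logical flow for block $(1)$--$(5)$: to close the cycle you also need a route back from $(5)$ (or $(4)$) to $(1)$, and this is not pure bookkeeping either. With $\Gamma^s_{f,\mathbf z}=0$, formula $(\dagger)$ gives $\mu_{\mathbf q}(f_{\mathbf q})=\big(\Lambda^s_{f,\mathbf z}\cdot V(z_0-q_0,\dots,z_{s-1}-q_{s-1})\big)_{\mathbf q}$, but conservation of intersection number yields only that the \emph{sum} of these over the fiber $G^{-1}(G(\mathbf q))\cap\Sigma f$ equals $\mu_{\mathbf 0}(f_{\mathbf 0})$; getting constancy at each individual $\mathbf q$ still requires $G_{|_{\Sigma f}}$ to be a local isomorphism, i.e., smoothness and transversality. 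That is again rigidity supplied by \cite{lemassey}, not a formal consequence of $(\dagger)$, so your phrase ``drag in $(2)$ and $(5)$'' hides one more nontrivial direction.
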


\medskip

Of course, we make the following definition:

\begin{defn} We say that $f$ {\bf defines a $\mu$-constant family} at/near the origin provided that the equivalent conditions in (a)-(g) of \thmref{thm:milnorequi} hold.
\end{defn}

\smallskip

Of course, we say that $f$ defines a $\mu$-constant family near an arbitrary point $\mbf p\in\Sigma f$ provided that conditions in (a)-(g) of \thmref{thm:milnorequi} hold with the origin replaced with $\mbf p$.

\medskip

\section{L\^e Cycles and the Main Theorem}\label{sec:lecycles}

We continue with $f$ as in the previous two sections; in particular,  $\Sigma f\subseteq V(f)$ and $s:=\dim\Sigma f=\dim_\0\Sigma f$. The reader is referred to  \cite{lecycles} and  \cite{nonisolle} for details of L\^e cycles and L\^e numbers, but we shall summarize needed properties here. Recall that the cycle $\Gamma_{f, \mbf z}^s$ was defined in the previous section.

\medskip

\begin{prop}\label{prop:lecycles}
For a generic linear choice of coordinates ({\it prepolar coordinates}) $\mbf z$ for $\C^{n+1}$, there exists an open neighborhood of the origin (which we call $\U$ again) such that the {\it L\^e cycles} $\Lambda_{f, \mbf z}^s$, \dots, $\Lambda_{f, \mbf z}^1$, $\Lambda_{f, \mbf z}^0$ are defined inside $\U$ and have the following properties:

\begin{enumerate}
\item Each $\Lambda_{f, \mbf z}^k$ is a purely $k$-dimensional analytic effective cycle ({\bf not} a cycle class) in $\Sigma f$.
\smallskip
\item $\Sigma f=\bigcup_{k=0}^s\big|\Lambda_{f, \mbf z}^k\big|$, where $\big|\cdot\big|$ denotes the underlying set, and so every $k$-dimensional component of $\Sigma f$ is contained in $\big|\Lambda_{f, \mbf z}^k\big|$.
\smallskip
\item If $s>0$, then $\big|\Gamma^s_{f, \mbf z}\big|\cap\Sigma f =\bigcup_{k=0}^{s-1}\big|\Lambda_{f, \mbf z}^k\big|$. In particular, if the cycle $\Gamma_{f, \mbf z}^s=0$, then, for $0\leq k\leq s-1$, $\Lambda_{f, \mbf z}^k=0$.
\smallskip
\item For all $\mbf p\in\Sigma f$, for all $k$ such that $0\leq k\leq s$, $\Lambda_{f, \mbf z}^k$ properly intersects $V(z_0-p_0, \dots, z_{k-1}-p_{k-1})$ at $\mbf p$ (when $k=0$, the intersection is with $\U$). The $k$-th L\^e number of $f$ at $\mbf p$ with respect to $\mbf z$, $\lambda_{f, \mbf z}^k(\mbf p)$,  is defined to be the intersection number 
$$
\left(\Lambda_{f, \mbf z}^k\cdot V(z_0-p_0, \dots, z_{k-1}-p_{k-1})\right)_{\mbf p}.
$$
%\item If $\dim_{\mbf p}\Sigma f=0$, then $\lambda_{f, \mbf z}^0(\mbf p)=\mu_{\mbf p}(f)$.
%\smallskip
%    \item For all $\mbf p\in\Sigma f$ near $\0$, $\Sigma (f_{|_{V(z_0-p_0)}})=V(z_0-p_0)\cap\Sigma f$ near $\mbf p$ and if $\dim_{\mbf p}\Sigma f\geq 1$, then $\dim_{\mbf p}\Sigma (f_{|_{V(z_0-p_0)}})=-1+\dim_{\mbf p}\Sigma f$.
%\smallskip
%\item  The coordinates $\hat{\mbf z}:=(z_1, \dots, z_n)$ are prepolar at the origin for $f_{|_{V(z_0)}}$  and, for all $k\geq 1$, $\Lambda_{f_{|_{V(z_0)}},\hat{\mbf z}}^k=\Lambda_{f, \mbf z}^{k+1}\cdot V(z_0)$ and 
%$$\Lambda_{f_{|_{V(z_0)}},\hat{\mbf z}}^0=\Lambda_{f, \mbf z}^{1}\cdot V(z_0)+ \Gamma_{f, z_0}^{1}\cdot V(z_0),$$
%where $\Gamma_{f, z_0}^{1}$ is the (possibly non-reduced) relative polar curve of $f$ with respect to $z_0$, as defined and investigated by Hamm, L\^e, and Teissier.
%\smallskip
%\item $\lambda_{f, \mbf z}^0(\0)=\big(\Gamma_{f, z_0}^{1}\cdot V\big(\frac{\partial f}{\partial z_0}\big)\big)_\0$ (this intersection is proper).
\smallskip
\item For each $\mbf p\in\Sigma f$, letting $d:=\dim_{\mbf p}\Sigma f$, there is a chain complex of free Abelian $\Z$-modules
$$
0\rightarrow \Z^{\lambda^{d}_{f, \mbf z}(\mbf p)}\rightarrow  \Z^{\lambda^{d-1}_{f, \mbf z}(\mbf p)}\rightarrow\cdots\rightarrow  \Z^{\lambda^{1}_{f, \mbf z}(\mbf p)}\rightarrow  \Z^{\lambda^{0}_{f, \mbf z}(\mbf p)}\rightarrow 0,
$$
where the cohomology at the $\lambda^{k}_{f, \mbf z}(\mbf p)$ term is  isomorphic to $\widetilde H^{n-k}(F_{f, \mbf p}; \Z)$.
\smallskip
%\item  Suppose that $\lambda_{f, \mbf z}^0(\mbf p)=0$. Then, by Items (6) and (7), $\Gamma_{f, z_0}^{1}=0$ near $\mbf p$, and so $\Lambda_{f_{|_{V(z_0-p_0)}},\hat{\mbf z}}^0=\Lambda_{f, \mbf z}^{1}\cdot V(z_0-p_0)$. Furthermore, by L\^e's attaching result (see \cite{leattach} and also Proposition 3.1 of \cite{lecycles}), $\widetilde H^n(F_{f, \mbf p}; \Z)=0$ and, for all $k$ such that $0\leq k\leq n-1$,
%$\widetilde H^k(F_{f_{|_{V(z_0-p_0)}}, \,\mbf p}; \Z)\cong \widetilde H^k(F_{f, \,\mbf p}; \Z)$.
\end{enumerate}
\end{prop}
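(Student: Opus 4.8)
The plan is to treat \propref{prop:lecycles} as a compendium of standard facts from the theory of L\^e cycles and to assemble it from \cite{lecycles} (with \cite{nonisolle} supplying the analytic refinements); below I indicate the mechanism behind each item rather than reproduce the arguments.

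The starting point is the inductive construction of the polar and L\^e cycles relative to a generic linear coordinate choice $\mbf z$. After shrinking $\U$, the Curve Selection Lemma guarantees that for $1\le k\le s$ the analytic set $V\big(\frac{\partial f}{\partial z_k},\dots,\frac{\partial f}{\partial z_n}\big)$ is purely $k$-dimensional near $\0$; one lets the polar cycle $\Gamma^k_{f,\mbf z}$ be the sum of those of its components not contained in $\Sigma f$ (so $\Gamma^s_{f,\mbf z}$ is the cycle already named in \secref{sec:muconst}), and one defines the L\^e cycle $\Lambda^k_{f,\mbf z}$ through the cycle identity $\Gamma^{k+1}_{f,\mbf z}\cdot V(z_k-p_k)=\Gamma^k_{f,\mbf z}+\Lambda^k_{f,\mbf z}$ --- the crucial input, due to L\^e, being that a generic hyperplane slice of a relative polar variety is again a relative polar variety, now for the restricted function. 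Granting this, properties (1) and (4) fall out of the construction together with the genericity of the linear forms $z_0,\dots,z_{k-1}$, which is precisely what forces the stated proper intersections; (2) holds because $\Sigma f=V\big(\frac{\partial f}{\partial z_0},\dots,\frac{\partial f}{\partial z_n}\big)$ and an inductive dimension count shows its support is exhausted by the $\big|\Lambda^k_{f,\mbf z}\big|$, each $k$-dimensional component of $\Sigma f$ already occurring in $\big|\Lambda^k_{f,\mbf z}\big|$ since it can lie in no polar cycle; and (3) is the geometric statement --- proved by a dimension estimate on $\big|\Gamma^s_{f,\mbf z}\big|\cap\Sigma f$ --- that $\Gamma^s_{f,\mbf z}$ meets $\Sigma f$ exactly along $\bigcup_{k\le s-1}\big|\Lambda^k_{f,\mbf z}\big|$, whence $\Gamma^s_{f,\mbf z}=0$ forces the effective cycles $\Lambda^{s-1}_{f,\mbf z},\dots,\Lambda^0_{f,\mbf z}$, having empty support, to vanish.

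The item carrying all the content is (5), and this is where I expect the difficulty to be concentrated. The plan is to iterate L\^e's attachment theorem: for a sufficiently generic affine hyperplane $H$ through $\mbf p$, the Milnor fiber $F_{f,\mbf p}$ is obtained up to homotopy from $F_{f|_H,\mbf p}$ by attaching cells of the single dimension $n$, whose number is an intersection multiplicity of the relative polar curve with $V(f)$ that the L\^e-cycle relations express through the $\lambda^k_{f,\mbf z}(\mbf p)$. Applying this along the chain of slices by $V(z_0-p_0)$, then $V(z_0-p_0,z_1-p_1)$, and so on down to an isolated singularity --- prepolarity of $\mbf z$ being exactly the hypothesis that makes all of these slices simultaneously generic enough --- and splicing the long exact cohomology sequences of the successive pairs yields the displayed chain complex of free $\Z$-modules, with $k$-th term $\Z^{\lambda^k_{f,\mbf z}(\mbf p)}$ and cohomology $\widetilde H^{n-k}(F_{f,\mbf p};\Z)$. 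This is the main theorem of \cite{lecycles}; once it is available, observe that combining it with (1) (which gives $\lambda^k_{f,\mbf z}(\mbf p)=0$ for $k>d:=\dim_{\mbf p}\Sigma f$) recovers the Kato--Matsumoto vanishing range of \cite{katomatsu} invoked in the introduction.
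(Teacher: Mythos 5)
The paper does not actually prove \propref{prop:lecycles}: it is presented as a summary of known properties, with the proofs delegated wholesale to \cite{lecycles} and \cite{nonisolle}. So the only meaningful comparison is between your sketch and the arguments in those references, and your road map is largely the standard one --- in particular, for item (5) the iteration of L\^e's attaching theorem down a flag of generic affine slices, followed by splicing the long exact sequences of the successive pairs, is exactly how the chain complex with cohomology $\widetilde H^{n-k}(F_{f,\mbf p};\Z)$ is obtained in \cite{lecycles}, and your closing observation that (1) plus (5) recovers the Kato--Matsumoto concentration range is correct.

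There is, however, a concrete error at the foundation of your sketch: the recursion defining the L\^e cycles is not $\Gamma^{k+1}_{f,\mbf z}\cdot V(z_k-p_k)=\Gamma^k_{f,\mbf z}+\Lambda^k_{f,\mbf z}$ but rather $\Gamma^{k+1}_{f,\mbf z}\cdot V\big(\tfrac{\partial f}{\partial z_k}\big)=\Gamma^k_{f,\mbf z}+\Lambda^k_{f,\mbf z}$; that is, one slices the $(k+1)$-dimensional relative polar cycle by the hypersurface defined by the next partial derivative, not by a hyperplane. This matters: $\Gamma^{k+1}_{f,\mbf z}$ has, by construction, no components contained in $\Sigma f$, so a generic hyperplane section of it also has none, and your version of the recursion would force every $\Lambda^k_{f,\mbf z}$ to be zero --- contradicting item (2) and the explicit computation $\Lambda^s_{f,\mbf z}=\sum_\nu{\stackrel{\circ}{\mu}}_\nu[\nu]$ in \secref{sec:muconst} (which is consistent with the correct recursion, since $C=\Gamma^{n+1}\cdot V(\partial f/\partial z_s)\cdots V(\partial f/\partial z_n)$ restricted appropriately). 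Hyperplane slices $V(z_0-p_0,\dots,z_{k-1}-p_{k-1})$ enter only afterwards, in defining the L\^e \emph{numbers} in item (4) and in the inductive arguments where, as you correctly note, a generic hyperplane slice of a polar variety is a polar variety of the restriction. With the recursion corrected, the rest of your outline for (1)--(4) (proper intersection from genericity of the linear forms, the exhaustion of $\Sigma f$ by the supports, and the identification of $|\Gamma^s_{f,\mbf z}|\cap\Sigma f$ with the union of the lower L\^e cycle supports) matches the treatment in \cite{lecycles}.
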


\bigskip

We also recall Theorem 5.3 of \cite{lemassey}:

\begin{thm}\label{thm:lemassey} \textnormal{(L\^e-Massey)}\  Let $\mbf p\in\Sigma f$, let $d=\dim_{\mbf p}\Sigma f$, and suppose that $\mbf z$ are prepolar coordinates for $f$ at $\mbf p$. Finally, suppose that $\widetilde H^{n-d}(F_{f, \mbf p}; \Z)\cong \Z^{\lambda^d_{f, \mbf z}(\mbf p)}$. Then $f$ defines a $\mu$-constant family near $\mbf p$.
\end{thm}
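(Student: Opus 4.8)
\noindent\emph{Sketch of the argument.}
The plan is to reduce to the case $d = 1$ by iterated prepolar hyperplane slicing and, in that case, to combine L\^e's attaching theorem with a distinguished basis of vanishing cycles for the isolated singularity cut out by the last slice; throughout, the goal is to verify condition (4) of \thmref{thm:milnorequi} for the coordinates $\mbf z$, namely $\mu_{\mbf p}(f_{\mbf p}) = \lambda^d_{f,\mbf z}(\mbf p)$, since by that theorem this already implies that $f$ defines a $\mu$-constant family near $\mbf p$. The first move is to feed the hypothesis into \propref{prop:lecycles}(5): the term $\widetilde H^{n-d}(F_{f,\mbf p};\Z)$ is the kernel of the first differential $\delta\colon\Z^{\lambda^d_{f,\mbf z}(\mbf p)}\to\Z^{\lambda^{d-1}_{f,\mbf z}(\mbf p)}$ of that complex, so if it is free of rank $\lambda^d_{f,\mbf z}(\mbf p)$ then $\im\delta$ has rank $0$ inside a torsion-free module, forcing $\delta = 0$. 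Hence $\widetilde H^{n-d}(F_{f,\mbf p};\Z) = \Z^{\lambda^d_{f,\mbf z}(\mbf p)}$ exactly, and the truncated complex $0\to\Z^{\lambda^{d-1}_{f,\mbf z}(\mbf p)}\to\cdots\to\Z^{\lambda^{0}_{f,\mbf z}(\mbf p)}\to0$ computes the remaining stalk cohomology of $F_{f,\mbf p}$.

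For the reduction to $d=1$, write $\mbf z = (z_0,\dots,z_n)$ and set $f^{(1)} := f_{|_{V(z_0-p_0,\dots,z_{d-2}-p_{d-2})}}$ and $f^{(0)} := f_{|_{V(z_0-p_0,\dots,z_{d-1}-p_{d-1})}}$, so that $\dm_{\mbf p}\Sigma f^{(1)} = 1$, $f^{(0)}$ has an isolated critical point at $\mbf p$, and, since $\mbf z$ is prepolar for $f$ at $\mbf p$, the coordinates $(z_{d-1},\dots,z_n)$ are prepolar for $f^{(1)}$ at $\mbf p$. A routine comparison of the transverse data of the $d$-dimensional components $\nu$ of $\Sigma f$ with those of the $1$-dimensional components $\nu\cap V(z_0-p_0,\dots,z_{d-2}-p_{d-2})$ of $\Sigma f^{(1)}$ gives $\lambda^1_{f^{(1)},(z_{d-1},\dots,z_n)}(\mbf p) = \lambda^d_{f,\mbf z}(\mbf p)$. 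By L\^e's theorem on prepolar slices (cf.\ \cite{lecycles}), each successive un-slicing $f^{(k-1)}\rightsquigarrow f^{(k)}$ attaches cells of real dimension $n-d+k$; for $k\ge 2$ this is $\ge n-d+2$, so those attachings leave cohomology in degree $n-d$ unchanged, and therefore $\widetilde H^{n-d}(F_{f,\mbf p};\Z)\cong\widetilde H^{n-d}(F_{f^{(1)},\mbf p};\Z)$. Thus $f^{(1)}$ satisfies the hypotheses of the theorem in the case of a $1$-dimensional critical locus. Granting that case, $f^{(1)}$ defines a $\mu$-constant family near $\mbf p$, so --- the coordinates $(z_{d-1},\dots,z_n)$ being generic --- condition (d) of \thmref{thm:milnorequi} yields $\Gamma^1_{f^{(1)},(z_{d-1},\dots,z_n)} = 0$ near $\mbf p$. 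Identifying this polar curve, as a cycle near $\mbf p$, with $\Gamma^d_{f,\mbf z}\cap V(z_0-p_0,\dots,z_{d-2}-p_{d-2})$, and invoking formula $(\dagger)$ of \secref{sec:muconst} at $\mbf p$, one gets $\mu_{\mbf p}(f_{\mbf p}) - \lambda^d_{f,\mbf z}(\mbf p) = \bigl(\Gamma^d_{f,\mbf z}\cdot V(z_0-p_0,\dots,z_{d-1}-p_{d-1})\bigr)_{\mbf p} = 0$, which is the desired condition (4).

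It remains to treat the case $d = 1$, which is the heart of the matter. Here I would put $f_0 := f_{|_{V(z_0-p_0)}}$, an isolated singularity at $\mbf p$ with Milnor number $\mu_0 := \mu_{\mbf p}(f_0) = \mu_{\mbf p}(f_{\mbf p})$, and set $\gamma := (\Gamma^1_{f,\mbf z}\cdot V(z_0-p_0))_{\mbf p}$ and $N := (\Gamma^1_{f,\mbf z}\cdot V(f))_{\mbf p}$; formula $(\dagger)$ gives $\mu_0 = \lambda^1_{f,\mbf z}(\mbf p) + \gamma$, so proving $\gamma = 0$ is exactly condition (4) for $\mbf z$, and it suffices to do that. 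By L\^e's attaching theorem, $F_{f,\mbf p}$ has the homotopy type of $F_{f_0,\mbf p}\simeq\bigvee^{\mu_0}S^{n-1}$ with $N$ cells of dimension $n$ attached, the $i$-th along a vanishing cycle $v_i$ in the Milnor lattice $\widetilde H_{n-1}(F_{f_0,\mbf p};\Z)\cong\Z^{\mu_0}$; writing $\partial\colon\Z^N\to\Z^{\mu_0}$ for $\mbf e_i\mapsto v_i$, we have $\widetilde H_n(F_{f,\mbf p}) = \ker\partial$, and the hypothesis (combined with the first paragraph) is equivalent to $\rank\partial = \mu_0 - \lambda^1_{f,\mbf z}(\mbf p) = \gamma$, so that $\rank\widetilde H^n(F_{f,\mbf p};\Z) = N - \gamma$. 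On the other hand, on each branch $C$ of $\Gamma^1_{f,\mbf z}$ through $\mbf p$ one has $\operatorname{ord}_C f = \operatorname{ord}_C(\partial f/\partial z_0) + \operatorname{ord}_C(z_0-p_0)$, with $\operatorname{ord}_C(\partial f/\partial z_0)\ge 1$ because $\mbf p\in\Sigma f$; summing over branches gives $N > \gamma$ whenever $\Gamma^1_{f,\mbf z}\neq 0$ near $\mbf p$. So everything comes down to showing that $\rank\partial = \gamma$ is impossible when $\Gamma^1_{f,\mbf z}\neq 0$ near $\mbf p$; equivalently, that the vanishing cycles $v_1,\dots,v_N$ must then span a sublattice of rank $N$.

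I expect essentially all of the difficulty to lie in this last step, which is the only part requiring more than homological algebra and the intersection theory of L\^e cycles. The plan there is to realize $\mbf p$ as a limit, along $\Sigma f$, of points at which $f$ has a transverse isolated singularity, to describe the family $\{f_{|_{V(z_0 = c)}}\}_{c}$ and its monodromy through L\^e's carousel, and to locate the $v_i$ inside a distinguished basis of the Milnor lattice of $f_0$ arising from a Morsification. The Picard--Lefschetz description of how the polar singularities swing past the singularities lying on $\Sigma f$, together with the combinatorics of the distinguished basis, should force $\langle v_1,\dots,v_N\rangle$ to have rank $N$ as soon as the polar curve is nonempty near $\mbf p$, contradicting $\rank\partial = \gamma < N$; the hard, technical work --- pinning down which elements of a distinguished basis the polar vanishing cycles can be --- is what \cite{lemassey} carries out.
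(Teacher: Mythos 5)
First, a point of comparison that matters here: the paper does not prove this statement at all --- it is imported verbatim as Theorem 5.3 of \cite{lemassey}, and the introduction only describes its proof as involving ``the geometry of the relative polar curve and distinguished bases for the vanishing cycles in the isolated critical point case.'' Your sketch matches that description in its architecture, and the parts you do carry out are correct: the observation that the hypothesis forces the first differential of the complex in \propref{prop:lecycles}(5) to vanish (rank of the kernel equals the rank of the source, so the image is a rank-zero subgroup of a free group, hence zero); the reduction to $d=1$ by prepolar slicing, since the un-slicings for $k\geq 2$ attach cells of dimension $\geq n-d+2$ and so cannot change $\widetilde H^{n-d}$; the translation of the hypothesis, via universal coefficients and L\^e's attaching theorem, into $\rank\partial=\mu_0-\lambda^1_{f,\mbf z}(\mbf p)=\gamma$; and the inequality $N>\gamma$ when $\Gamma^1_{f,\mbf z}\neq 0$, coming from $\operatorname{ord}_C f=\operatorname{ord}_C(\partial f/\partial z_0)+\operatorname{ord}_C(z_0-p_0)$ with $\operatorname{ord}_C(\partial f/\partial z_0)\geq 1$. (A small mislabel: to kill $\Gamma^1_{f^{(1)},(z_{d-1},\dots,z_n)}$ for the \emph{specific} prepolar coordinates you should invoke (e) and then (5) of \thmref{thm:milnorequi}, not (d), which only speaks of generic coordinates.)

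The genuine gap is the one you name yourself in the last paragraph: the entire theorem has been reduced to the claim that a nonempty polar curve forces $\rank\partial>\gamma$ (you assert the stronger statement that the polar vanishing cycles $v_1,\dots,v_N$ span a rank-$N$ sublattice of the Milnor lattice of $f_0$), and for this claim you offer no argument --- only the expectation that the carousel, Picard--Lefschetz, and the combinatorics of a distinguished basis ``should force'' it, with the hard work deferred to \cite{lemassey}. But every other step in your proposal is homological algebra together with intersection-theoretic identities for L\^e cycles that hold whether or not the theorem is true; this one step \emph{is} the theorem. Since the present paper supplies no proof of it either (it simply cites \cite{lemassey}), your proposal is an accurate and well-organized road map to where the content lives, but it is not an independent proof of the statement.
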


\bigskip

Now we prove the main theorem; it is essentially an application of \thmref{thm:lemassey}, but we find the statement surprising.

\begin{thm}\label{thm:main}  Suppose $\Sigma f$ is purely $s$-dimensional and that every irreducible component of $\Sigma f$ is smooth at $\0$. Suppose also that there exists an analytic subset $Y\subseteq\Sigma f$ of dimension at most $s-2$ such that $\Sigma f\backslash Y$ is smooth and, for all $\mbf p\in \Sigma f\backslash Y$, $\widetilde H^{n-s}(F_{f, \mbf p}; \Z)\cong \Z^{\mu_0}$, where $\mu_0$ is independent of $\mbf p$. 

Then $f$ defines a $\mu$-constant family near $\0$ with constant Milnor number $\mu_0$ (in particular, at $\0$, $\Sigma f$ has a single smooth irreducible component).
\end{thm}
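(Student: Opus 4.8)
The plan is to reduce everything to \thmref{thm:lemassey} by showing that, after choosing prepolar coordinates $\mbf z$ for $f$ at $\0$, we have $\widetilde H^{n-s}(F_{f, \0}; \Z)\cong\Z^{\lambda^s_{f, \mbf z}(\0)}$, i.e., that the top-dimensional L\^e number at $\0$ accounts for all of the lowest-degree cohomology. Granting this, \thmref{thm:lemassey} immediately gives that $f$ defines a $\mu$-constant family near $\0$, and then conditions (c) and (f) of \thmref{thm:milnorequi} force $\Sigma f$ to be smooth at $\0$ (hence a single irreducible component) with constant stalk cohomology $\Z^\mu$ in degree $n-s$; comparing with the hypothesis on $\Sigma f\setminus Y$ identifies $\mu=\mu_0$.

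The heart of the argument is therefore the numerical identity $\lambda^s_{f, \mbf z}(\0)=\dim_\Z\widetilde H^{n-s}(F_{f,\0};\Z)$. First I would record that by \propref{prop:lecycles}(5), applied at $\mbf p=\0$ with $d=s$, there is a chain complex $0\to\Z^{\lambda^s_{f,\mbf z}(\0)}\to\cdots\to\Z^{\lambda^0_{f,\mbf z}(\0)}\to 0$ whose cohomology at the $\lambda^k$ spot is $\widetilde H^{n-k}(F_{f,\0};\Z)$; since this is a complex of free modules with the leftmost term injecting, $\lambda^s_{f,\mbf z}(\0)\geq \dim_\Z\widetilde H^{n-s}(F_{f,\0};\Z)$ always. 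So it suffices to prove the reverse inequality $\lambda^s_{f,\mbf z}(\0)\leq\mu_0$. For this I would use the description $\Lambda^s_{f,\mbf z}=\sum_\nu{\stackrel{\circ}{\mu}}_\nu[\nu]$ over the $s$-dimensional components $\nu$ of $\Sigma f$, together with $\lambda^s_{f,\mbf z}(\0)=\sum_\nu{\stackrel{\circ}{\mu}}_\nu(\nu\cdot V(z_0,\dots,z_{s-1}))_\0$. The generic transverse Milnor number ${\stackrel{\circ}{\mu}}_\nu$ is computed at a generic point of $\nu$, which lies in $\Sigma f\setminus Y$ since $\dim Y\leq s-2<s=\dim\nu$; at such a point $\mbf p$, a generic normal slice $N$ of codimension $s$ meets $\Sigma f$ only at $\mbf p$, transversally, so $F_{f|_N,\mbf p}$ is the Milnor fiber of an isolated singularity and its only reduced cohomology $\widetilde H^{n-s}(F_{f|_N,\mbf p};\Z)=\Z^{{\stackrel{\circ}{\mu}}_\nu}$ is isomorphic — via the standard comparison of the transverse Milnor fiber with the stalk of $\phi_f$, i.e. $\widetilde H^{n-s}(F_{f,\mbf p};\Z)$ being the cohomology of the vanishing cycle complex which near $\mbf p$ is supported just on the smooth $\Sigma f$ of codimension $s$ — to $\widetilde H^{n-s}(F_{f,\mbf p};\Z)\cong\Z^{\mu_0}$. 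Hence ${\stackrel{\circ}{\mu}}_\nu=\mu_0$ for every component $\nu$, and $\lambda^s_{f,\mbf z}(\0)=\mu_0\sum_\nu(\nu\cdot V(z_0,\dots,z_{s-1}))_\0=\mu_0\cdot m$, where $m=\sum_\nu\operatorname{mult}_\0\nu\geq 1$ is the number-with-multiplicity of local branches of $\Sigma f$ at $\0$ met by a generic linear slice.

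It then remains to rule out $m>1$, i.e. to show $\lambda^s_{f,\mbf z}(\0)=\mu_0$ rather than a larger multiple. Here I would compare the two inequalities: $\mu_0=\dim_\Z\widetilde H^{n-s}(F_{f,\0};\Z)\leq\lambda^s_{f,\mbf z}(\0)=\mu_0\cdot m$ is automatic, so this alone does not finish it; instead I would argue that if $\Sigma f$ had two branches through $\0$, or a single branch with $\operatorname{mult}_\0>1$, one gets a contradiction with the structure of the stalk cohomology at $\0$ — e.g. by a Mayer–Vietoris / semicontinuity argument bounding $\dim_\Z\widetilde H^{n-s}(F_{f,\0};\Z)$ in terms of the generic value, or more cleanly by first handling the case where $\Sigma f$ is already irreducible and smooth at $\0$ (where $m=1$ is automatic and the theorem follows directly from \thmref{thm:lemassey}) and then showing the hypotheses force this case. \textbf{I expect the main obstacle to be exactly this last point:} promoting "each component is individually smooth at $\0$ and $\dim Y\leq s-2$" to "$\Sigma f$ is irreducible near $\0$", since the naive L\^e-number bookkeeping only yields $\lambda^s=\mu_0\cdot(\text{number of branches})$ and one must use genuine topological input about $F_{f,\0}$ — most plausibly the fact that the $\lambda^s$-term of the L\^e chain complex at $\0$ surjects onto nothing from the right only after killing contributions that would have to come from the lower L\^e numbers $\lambda^{s-1},\dots$, which by \propref{prop:lecycles}(3) vanish precisely when $\Gamma^s_{f,\mbf z}=0$ — so I would aim to show directly that $\Gamma^s_{f,\mbf z}=0$, closing the loop via condition (5) of \thmref{thm:milnorequi}.
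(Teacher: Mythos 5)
Your reduction stalls precisely at the step that carries all the content of the theorem, and you flag this yourself. There are two distinct problems. First, the claim that it ``suffices to prove $\lambda^s_{f,\mbf z}(\0)\leq\mu_0$'' is not right as stated: the chain complex of \propref{prop:lecycles}(5) identifies $\widetilde H^{n-s}(F_{f,\0};\Z)$ with the kernel of $\Z^{\lambda^s_{f,\mbf z}(\0)}\rightarrow\Z^{\lambda^{s-1}_{f,\mbf z}(\0)}$, so to invoke \thmref{thm:lemassey} at $\0$ you need that kernel to have rank $\lambda^s_{f,\mbf z}(\0)$; an upper bound $\lambda^s_{f,\mbf z}(\0)\leq\mu_0$ would help only if you also had the lower bound $\dim_{\Z}\widetilde H^{n-s}(F_{f,\0};\Z)\geq\mu_0$, and the hypothesis says nothing about the stalk at $\0$, which may lie in $Y$. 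What you actually need is $\lambda^{s-1}_{f,\mbf z}(\0)=0$, i.e.\ $\Lambda^{s-1}_{f,\mbf z}=0$ near $\0$ (equivalently $\Gamma^s_{f,\mbf z}=0$), and this is exactly the point you name as ``the main obstacle'' without proving it. Note also that your argument never uses $\dim Y\leq s-2$ in an essential way --- everything you do would go through verbatim with $\dim Y\leq s-1$ --- yet the statement is false in that generality: for $f=y^2+x^3+tx^2$ on $\C^3$, the transverse Milnor number is constantly $1$ on $\Sigma f\setminus\{\0\}$ but the family is not $\mu$-constant at $\0$.

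The paper closes the gap with a contradiction argument that is exactly where the codimension-two hypothesis does its work: assume $\Lambda^{s-1}_{f,\mbf z}\neq 0$ near $\0$. Since $|\Lambda^{s-1}_{f,\mbf z}|$ is purely $(s-1)$-dimensional and $\dim Y\leq s-2$, one may choose a smooth point $\mbf p\in|\Lambda^{s-1}_{f,\mbf z}|\setminus Y$ lying in no lower-dimensional L\^e cycle, with the slice $V(z_0-p_0,\dots,z_{s-1}-p_{s-1})$ transverse to $\Sigma f$ at $\mbf p$. At nearby points $\mbf q\in\Sigma f\setminus|\Lambda^{s-1}_{f,\mbf z}|$ the chain complex degenerates to its top term, giving $\lambda^s_{f,\mbf z}(\mbf q)=\mu_0$, whence $\lambda^s_{f,\mbf z}(\mbf p)=\mu_0$ by transversality; the hypothesis at $\mbf p$ then verifies the condition of \thmref{thm:lemassey} \emph{at $\mbf p$}, not at $\0$, so $f$ is $\mu$-constant near $\mbf p$, which forces $\Gamma^s_{f,\mbf z}=0$ and hence $\Lambda^{s-1}_{f,\mbf z}=0$ near $\mbf p$ --- contradicting the choice of $\mbf p$. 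Only after this does one apply \thmref{thm:lemassey} at $\0$. Your computations $\lambda^s_{f,\mbf z}(\0)=\mu_0\cdot m$ and ${\stackrel{\circ}{\mu}}_\nu=\mu_0$ are correct but end up being byproducts of the conclusion rather than a route to it; to complete your proposal you would need to supply the argument above (or an equivalent one) for the vanishing of $\Lambda^{s-1}_{f,\mbf z}$.
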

\begin{proof} Let $\mbf z$ be prepolar coordinates for $f$ at $\0$ such that $V(z_0, \dots, z_{s-1})$ transversely intersects each irreducible component of $\Sigma f$ at $\0$. We first wish to show that $\Lambda^{s-1}_{f, \mbf z}$ is zero near $\0$. Suppose that it is not.

Let $\mbf p\in |\Lambda^{s-1}_{f, \mbf z}|\backslash Y$ be such that $\mbf p$ is a smooth point of $|\Lambda^{s-1}_{f, \mbf z}|$ and is not in a smaller-dimensional L\^e cycle. Furthermore, we choose $\mbf p$ close enough to $\0$ so that $V(z_0-p_0, \dots, z_{s-1}-p_{s-1})$ transversely intersects $\Sigma f$ at $\mbf p$. 

Note that  (5) of \propref{prop:lecycles} implies that, for $\mbf q\in \Sigma f\backslash |\Lambda^{s-1}_{f, \mbf z}|$ near $\mbf p$, $\lambda^s_{f, \mbf z}(\mbf q)=\mu_0$. But $\Sigma f$ is smooth at $\mbf p$ and transversely intersected by $V(z_0-p_0, \dots, z_{s-1}-p_{s-1})$; thus $\lambda^s_{f, \mbf z}(\mbf p)=\mu_0$. Thus, our hypothesis implies that
$$
\widetilde H^{n-s}(F_{f, \mbf p}; \Z)\cong \Z^{\mu_0}=\Z^{\lambda^s_{f, \mbf z}(\mbf p)}
$$
and so, by \thmref{thm:lemassey}, $f$ defines a $\mu$-constant family near $\mbf p$. Now, by (e) and (5) from \thmref{thm:milnorequi}, $\Gamma^s_{f, \mbf z}$ is $0$ near $\mbf p$, which, by Property (3) above of L\^e cycles, implies that $\Lambda^{s-1}_{f, \mbf z}$ is zero near $\mbf p$; a contradiction of the choice of $\mbf p$.

Therefore $\Lambda^{s-1}_{f, \mbf z}$ is zero near $\0$, which means $\lambda^{s-1}_{f, \mbf z}(\0)=0$. Now (5) of \propref{prop:lecycles} tells us that $\widetilde H^{n-s}(F_{f, \0}; \Z)\cong \Z^{\lambda^s_{f, \mbf z}(\0)}$ and thus  \thmref{thm:lemassey} yields that $f$ defines a $\mu$-constant family near $\0$.
\end{proof}

\medskip

\begin{rem} Note that the assumption that $\widetilde H^{n-s}(F_{f, \mbf p}; \Z)\cong \Z^{\mu_0}$ is independent of the chosen $\mbf p\in \Sigma f\backslash Y$ is {\it a priori} weaker than saying that cohomology sheaf of the shifted, restricted vanishing cycles $\big(\phi_f[-1]\Z^\bullet_\U[n+1]\big)_{|_{\Sigma f}}$ in degree $-s$ is locally constant on $\Sigma f\backslash Y$.
\end{rem}

\section{Remarks and Questions}\label{sec:rem}

The most basic statement of the main theorem -- that, if $f$ has a smooth $s$-dimensional critical locus and the shifted vanishing cycles in degree $-s$ have constant stalk cohomology off a set of codimension 2, then the shifted vanishing cycles on all of $\Sigma f$ consist merely of a shifted constant sheaf  -- is a surprising result which in no way refers to L\^e cycles. This result really does seem as though it should be approachable through standard high-powered techniques and theorems about perverse sheaves, vanishing or nearby cycles, weight filtrations, the Decomposition Theorem, etc. And perhaps such a proof exists and would lead to a generalization of \thmref{thm:main}, but we do not see how to produce such a proof or generalization.

\medskip

One could hope to generalize \thmref{thm:main} by first proving a generalization of \thmref{thm:lemassey}. Perhaps the hypothesis that $\widetilde H^{n-d}(F_{f, \mbf p}; \Z)\cong \Z^{\lambda^d_{f, \mbf z}(\mbf p)}$ could be replaced with $\widetilde H^{n-k}(F_{f, \mbf p}; \Z)\cong \Z^{\lambda^k_{f, \mbf z}(\mbf p)}$ for some $k<d$ or perhaps one could use the hypothesis that one of the maps in the L\^e number chain complex from (5) of \propref{prop:lecycles}, other than $\Z^{\lambda^{d}_{f, \mbf z}(\mbf p)}\rightarrow  \Z^{\lambda^{d-1}_{f, \mbf z}(\mbf p)}$, is zero. However, aside from trivial generalizations, we do not see such a result.

\medskip

Finally, we mention that we originally hoped that Proposition 1.31 of \cite{lecycles} would enable us to produce a generalization of \thmref{thm:main}. That proposition says that, for prepolar coordinates $\mbf z$ at a point $\mbf p\in\Sigma f$, if pairs of distinct irreducible germs of $\Sigma f$ intersect in dimension at most $k-1$ at $\mbf p$ and $\lambda^k_{f, \mbf z}(\mbf p)=0$, then, for all $j\leq k$, $\lambda^j_{f, \mbf z}(\mbf p)=0$ and so, by (5) of \propref{prop:lecycles},  $\widetilde H^{n-j}(F_{f, \mbf p}; \Z)=0$ for $j\leq k$. 

Again, we have yet to see how this leads to a non-trivial generalization of  \thmref{thm:main} or \thmref{thm:lemassey}.

\bibliographystyle{plain}

\bibliography{Masseybib}

\end{document}